\newcommand{\textcyr}[1]{%
% {\fontencoding{OT2}\fontfamily{cmr}\fontseries{m}\fontshape{n}\selectfont #1}}
 {\fontencoding{OT2}\fontfamily{wncyr}\fontseries{m}\fontshape{n}\selectfont #1}}
\newcommand{\Sha}{{\mbox{\textcyr{Sh}}}}
\newcommand{\Z}{{\mathbb Z}}
\newcommand{\Q}{{\mathbb Q}}
\newcommand{\PP}{{\mathbb P}}
\newcommand{\kbar}{{\overline{k}}}
\newcommand{\calO}{{\mathcal O}}
\newcommand{\To}{\longrightarrow}
\DeclareMathOperator{\End}{End}
\DeclareMathOperator{\Hom}{Hom}
\DeclareMathOperator{\Gal}{Gal}
\DeclareMathOperator{\Pic}{Pic}
\DeclareMathOperator{\ALB}{\bf Alb}
\DeclareMathOperator{\Jac}{Jac}
\DeclareMathOperator{\HH}{H}
\newcommand{\gal}[1]{\mathfrak{g}_{#1}}
\newtheorem{Theorem}{Theorem}
\newtheorem{Lemma}[Theorem]{Lemma}
\theoremstyle{definition}
\newtheorem{Remark}[Theorem]{Remark}
\begin{document}

\title{Locally trivial torsors that are not Weil-Ch\^atelet divisible}

\author{Brendan Creutz}
\address{School of Mathematics and Statistics, University of Sydney, NSW 2006, Australia}
\email{brendan.creutz@sydney.edu.au}

\begin{abstract}
For every prime $p$ we give infinitely many examples of torsors under abelian varieties over $\Q$ that are locally trivial but not divisible by $p$ in the Weil-Ch\^atelet group. We also give an example of a locally trivial torsor under an elliptic curve over $\Q$ which is not divisible by $4$ in the Weil-Ch\^atelet group. This gives a negative answer to a question of Cassels.
\end{abstract}

\maketitle
\section{Introduction}
Let $A$ be an abelian variety over a number field $k$ with algebraic closure $\kbar$. A $k$-torsor under $A$ is a variety $T/k$ together with a simply transitive algebraic group action of $A$ on $T$ which is defined over $k$. The isomorphism classes of such torsors are parameterized by the Galois cohomology group $\HH^1(A) := \HH^1(k,A(\kbar))$, commonly known as the {\em Weil-Ch\^atelet group}. The subgroup $\Sha(A) \subset \HH^1(A)$ consisting of classes which become trivial over every completion of $k$ is known as the {\em Tate-Shafarevich group}. A torsor is trivial precisely when it possesses a rational point, so the classes in $\Sha(A)$ are precisely those represented by torsors which have points everywhere locally.

It is conjectured that $\Sha(A)$ is finite, and this is known for some modular abelian varieties including all elliptic curves over $\Q$ of analytic rank at most one. For principally polarized abelian varieties it is known that the order of $\Sha(A)$, if finite, is either a square or $2$ times a square \cite{PoonenStoll}. The proof of this fact is that there is a canonical antisymmetric pairing on $\Sha(A)$ whose kernel is the maximal divisible subgroup. The pairing was first defined by Cassels \cite{CasselsIII} in the case of elliptic curves. While doing so he raised the question of whether the elements of $\Sha(A)$ are infinitely divisible in the larger group $\HH^1(A)$ \cite[Problem 1.3]{CasselsIII}, noting that an affirmative answer would allow him to show that the kernel of the pairing is the maximal divisible subgroup. He was subsequently able to use the weaker result, furnished by Tate, that $\Sha(A) \subset p\HH^1(A)$ when $A$ is an elliptic curve and $p$ is prime to obtain his result on the pairing \cite{CasselsIV}. He notes, however, that the question of divisibility remained open \cite[Problem (b)]{CasselsIV}. 

The issue was then taken up by Bashmakov \cite{Bashmakov64,Bashmakov72} who showed that for certain CM abelian varieties the elements of $\Sha(A)$ are infinitely $p$-divisible in $\HH^1(A)$ whenever $p$ is sufficiently large (depending on $A$). This would also hold, of course, if $\Sha(A)$ were finite. Bashmakov's method makes use of results of Serre on the representation of Galois in the Tate module \cite{Serre1}. These have subsequently been extended and improved by Bogomolov and by Serre \cite{Bogomolov, Serre2}, in effect extending Bashmakov's result to arbitrary abelian varieties. Recently \c Ciperiani and Stix \cite{CipStix} have given a more refined analysis and, consequently, explicit bounds on $p$. In particular, they have shown that for elliptic curves over $\Q$, $p$-divisibility holds for all $p > 7$ \cite[Theorem A]{CipStix}. Moreover, for a fixed elliptic curve over $\Q$, they show that $p$-divisibility can fail for at most one odd prime, and then for at most two quadratic twists of the given curve.

The main result of this paper is that, despite the mounting evidence, the answer to Cassels' question is in general no.

\begin{Theorem}
\label{MainTheorem2}
There exists an elliptic curve $A/\Q$ such that $\Sha(A) \not\subset 4\HH^1(A)$.
\end{Theorem}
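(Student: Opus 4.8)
The plan is to recast $4$-divisibility as the vanishing of a connecting-map obstruction and then to engineer an elliptic curve for which this obstruction is visibly nonzero. Applying Galois cohomology to $0\to A[4]\to A\xrightarrow{4}A\to 0$ produces an injection $\HH^1(A)/4\HH^1(A)\hookrightarrow\HH^2(\Q,A[4])$, so a class $\xi\in\HH^1(A)$ lies in $4\HH^1(A)$ exactly when its image $\partial(\xi)$ under the connecting map $\partial\colon\HH^1(A)\to\HH^2(\Q,A[4])$ is zero. If $\xi\in\Sha(A)$ then $\xi$ dies in every completion, hence $\partial(\xi)$ dies in every $\HH^2(\Q_v,A[4])$ and therefore lies in $\Sha^2(\Q,A[4])$, which by Poitou--Tate duality together with the Weil pairing self-duality of $A[4]$ is finite and dual to $\Sha^1(\Q,A[4])$. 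So it suffices to exhibit an elliptic curve $A/\Q$ and $\xi\in\Sha(A)$ with nonzero image in $\Sha^2(\Q,A[4])$; in particular $A[4]$ must be a Galois module with $\Sha^1(\Q,A[4])\ne 0$, a Grunwald--Wang-type failure of a local--global principle coming from the extension $0\to A[2]\to A[4]\xrightarrow{2}A[2]\to 0$, which can occur even when $\Sha^1(\Q,A[2])=0$.

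To make the obstruction computable I would take $A$ with a rational $2$-torsion point and run a two-step descent. By Tate's theorem, recalled above, $\Sha(A)\subseteq 2\HH^1(A)$, so choose $\xi\in\Sha(A)$ of order $2$ and lift it to $c_0\in\HH^1(\Q,A[2])$ with $\xi=\iota_*c_0$ for the inclusion $\iota\colon A[2]\hookrightarrow A$. Since $\xi\in 2\HH^1(A)$, the Bockstein of $0\to A[2]\to A[4]\xrightarrow{2}A[2]\to 0$ kills $c_0$, so $c_0$ lifts to some $d_0\in\HH^1(\Q,A[4])$; then $\eta_0:=\iota_*d_0$ satisfies $2\eta_0=\xi$, and $\partial(\xi)$ is represented — up to the image of $A(\Q)/2A(\Q)$ and the ambiguity in $c_0$ and $d_0$ — by the Bockstein of $d_0$ arising from $0\to A[2]\to A[8]\xrightarrow{2}A[4]\to 0$. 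Thus $\xi\in 4\HH^1(A)$ precisely when this last class is accounted for by the lower-level data, a condition one can test by Hilbert-symbol and Kummer-theory computations once $A$ is fixed.

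There are then three points to verify for the chosen curve: that $\Sha(A)\ne 0$ with an understood $2$-part and $A[4]$ of the required non-split shape — arranged by taking $A$ within a suitable family (for instance quadratic twists of a carefully chosen curve) with controlled ramification, so that $\Sel^{(2)}(A)$, $\Sel^{(4)}(A)$ and the relevant $\HH^2$'s are explicitly computable; that the torsor $\xi$ really is trivial in every completion, which pins down the reduction types of $A$ and its behaviour at $2$ and $\infty$; and that the obstruction $\partial(\xi)\in\Sha^2(\Q,A[4])$ is nonzero, which one shows by producing a class in $\Sha^1(\Q,A[4])$ pairing nontrivially with it, or equivalently by a reciprocity argument in which the local invariants of the relevant symbol sum to zero yet no global splitting exists. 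Note that Theorem~\ref{MainTheorem2} asks only for a single such curve.

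The main obstacle is the tension between the last two points: imposing everywhere-local triviality on $\xi$ kills every ``visible'' obstruction to $4$-divisibility, so the non-divisibility can be witnessed only by a nonzero element of $\Sha^2(\Q,A[4])$, and one must find a single curve whose arithmetic both supplies such an element and forces it to be hit by $\partial$ on $\Sha(A)$. This is what makes an explicit construction — rather than a soft existence argument — necessary, and it is also the reason the elliptic-curve phenomenon first becomes available at $4=2^2$ rather than at a prime: for primes Tate's constraint $\Sha(A)\subseteq p\HH^1(A)$ is in force, whereas for higher-dimensional abelian varieties it is not, which is exactly what leaves room for the analogous first-order examples at every prime $p$.
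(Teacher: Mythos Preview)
Your theoretical framework is correct and is essentially the content of the paper's Theorem~\ref{ShaDivphi}: the obstruction to $\xi\in\Sha(A)$ lying in $4\HH^1(A)$ is $\partial(\xi)\in\Sha^2(A[4])$, and Poitou--Tate identifies this group with the dual of $\Sha^1(A[4])$. But what you have written is a plan, not a proof: no curve is specified and nothing is verified. The statement asks for an existence result, and as you yourself concede in the final paragraph, a soft argument will not do here; an explicit example is required, and supplying one is the entire content of the theorem.

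The paper's route is also more direct than your Bockstein chase. Rather than start from some $\xi\in\Sha(A)$ and climb through $A[2]$, $A[4]$, $A[8]$ to compute $\partial(\xi)$, the paper dualises first. Via the Cassels--Tate pairing one shows (Theorem~\ref{ShaDivn}) that $\Sha(A)\subset 4\HH^1(A)$ holds if and only if the image of $\Sha^1(A[4])\to\Sha(A)$ lies in the maximal divisible subgroup of $\Sha(A)$. So it suffices to exhibit an $E/\Q$ with $\Sha(E)$ finite (analytic rank $0$ will do) together with a class in $\Sha^1(E[4])$ whose image in $\Sha(E)$ is nonzero. With this reformulation one never touches $\HH^2$ or $E[8]$ at all.

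The paper takes $E\colon y^2=x(x+80)(x+205)$, which has $E(\Q)=E[2]$ and analytic rank $0$. From $0\to E[2]\to E[4]\to E[2]\to 0$ one sees that a class $\xi\in\HH^1(\Q,E[2])$ maps into $\Sha^1(E[4])$ provided that at every place $v$ its restriction lies in the image of the boundary $E[2]\to\HH^1(\Q_v,E[2])$. Since $E[2]$ is rational, $\HH^1(\Q,E[2])\simeq(\Q^\times/\Q^{\times 2})^2$, and one checks by an elementary square-class computation that $\xi=(1,5)$ satisfies this local condition everywhere while failing it globally; hence its image in $\Sha(E)$ is nonzero and not divisible. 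This replaces your entire second-descent apparatus. Incidentally, your intermediate claim ``since $\xi\in 2\HH^1(A)$, the Bockstein kills $c_0$'' is not correct as stated: it holds only after adjusting $c_0$ by an element of the image of $A(\Q)/2A(\Q)$, an ambiguity you acknowledge later but which already shows why computing $\partial(\xi)$ directly is the harder road.
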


\begin{Theorem}
\label{MainTheorem1}
For every prime $p$ there exists an abelian variety $A/\Q$ such that $\Sha(A) \not\subset p\HH^1(A)$.
\end{Theorem}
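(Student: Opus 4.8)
Divisibility by $p$ in the Weil-Ch\^atelet group is governed by the connecting homomorphism $\delta\colon\HH^1(\Q,A)\to\HH^2(\Q,A[p])$ attached to $0\to A[p]\to A\xrightarrow{p}A\to 0$, whose kernel is exactly $p\,\HH^1(\Q,A)$; since $\delta(\xi)$ is everywhere locally trivial for $\xi\in\Sha(A)$, the statement $\Sha(A)\not\subset p\,\HH^1(A)$ is equivalent to $\delta$ being nonzero on $\Sha(A)$, a condition living in $\Sha^2(\Q,A[p])$. I would first rewrite this using duality. The Weil pairing identifies $A^\vee[p]$ with the Cartier dual of $A[p]$, so Poitou-Tate gives $\Sha^2(\Q,A[p])\simeq\Sha^1(\Q,A^\vee[p])^\vee$, and under this isomorphism $\delta|_{\Sha(A)}$ becomes the map sending $\xi$ to the functional $c\mapsto\langle\xi,\bar c\rangle$, where $\langle\ ,\ \rangle$ is the Cassels-Tate pairing $\Sha(A)\times\Sha(A^\vee)\to\Q/\Z$ and $\bar c\in\Sha(A^\vee)$ is the image of $c$ under the natural map $\Sha^1(\Q,A^\vee[p])\to\Sha(A^\vee)[p]$; this is one of the standard descriptions of the Cassels-Tate pairing. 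Writing $S$ for the image of that natural map and using that $\langle\ ,\ \rangle$ is perfect when $\Sha$ is finite, one gets: \emph{if $\Sha(A^\vee)$ is finite, then $\Sha(A)\not\subset p\,\HH^1(A)$ if and only if $S\neq 0$}. So it suffices to exhibit, for each prime $p$, an abelian variety $A/\Q$ with $\Sha(A^\vee)$ finite admitting an everywhere locally trivial class in $\HH^1(\Q,A^\vee[p])$ with nonzero image in $\Sha(A^\vee)$. This forces $\dim A\ge 2$, and $A$ cannot be isogenous over $\Q$ to a product of elliptic curves: for an elliptic curve $E/\Q$ the theorem $\Sha(E)\subset p\,\HH^1(E)$ furnished by Tate is precisely the vanishing of the corresponding $S$.

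\textbf{Construction.} I would then look for $A/\Q$ whose mod-$p$ Galois image is small enough that $\Sha^1(\Q,A^\vee[p])\neq 0$, with the locally trivial class there of ``$\Sha$-type'' (not coming from $A^\vee(\Q)/p$), while keeping $\Sha(A^\vee)$ finite and $A^\vee(\Q)$ small. Natural candidates --- their torsion Galois image being confined to a small group --- are Jacobians of cyclic covers $y^p=f(x)$ and isogeny factors of Weil restrictions $\Res_{L/\Q}E$ of elliptic curves $E$ over a number field $L$. For such $A$, $S\neq 0$ should be arranged by choosing $L$ (or $f$) and then pinning down the ramification of the splitting field of $A^\vee[p]$ by a Grunwald/Chebotarev argument, so that a nonzero class in the Tate-Shafarevich group of the relevant finite group (relative to its cyclic subgroups) survives globally and maps nontrivially into $\Sha(A^\vee)$; finiteness of $\Sha$ and smallness of the Mordell-Weil groups are secured by a further twist or specialisation, invoking known finiteness results where available. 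Varying these choices yields infinitely many pairwise non-isogenous examples, which is what Theorem~\ref{MainTheorem1} and the abstract require.

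\textbf{Main obstacle.} Everything through the duality reduction is formal; the work is the construction, and the difficulty is that three desiderata pull against one another. First, the image of $A^\vee[p]$ must be small enough that $\Sha^1(\Q,A^\vee[p])\neq 0$ persists once the splitting field and its ramification are fixed. Second, the resulting locally trivial class must genuinely map to a nonzero element of $\Sha(A^\vee)$, which effectively requires knowing $\Sha(A^\vee)$ is finite. Third, $A^\vee(\Q)$ must be small enough that this class is not absorbed into $A^\vee(\Q)/p$. A large Galois image kills the first; a very small one tends to create extra rational points and to put $\Sha$ out of reach. So the heart of the argument will be an explicit family engineered to meet all three conditions simultaneously.
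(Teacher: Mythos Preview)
Your reduction via Poitou--Tate and the Cassels--Tate pairing is exactly the paper's Theorems~\ref{ShaDivn} and~\ref{ShaDivphi}, and you correctly single out Jacobians of cyclic covers $y^p=f(x)$ as the natural source of examples. But the plan has a genuine gap, and it is precisely the one you flag as the ``main obstacle'' without resolving: you require $\Sha(A^\vee)$ to be finite in order to pass from $S\neq 0$ to $\Sha(A)\not\subset p\,\HH^1(A)$. For the Jacobians in question --- of genus $(p^3-3p+2)/2$ over $\Q(\zeta_p)$, and larger still after Weil restriction to $\Q$ --- no finiteness result for $\Sha$ is available, and ``invoking known finiteness results where available'' cannot be made to work.

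The paper sidesteps this entirely. Rather than showing that the image $S\subset\Sha(\hat A)$ is merely nonzero, it exhibits a specific element of $S$ (namely the class of $\ALB^1_X$) and proves it is not even in $\phi\Sha(J)$, where $\phi=1-\zeta$. This is strictly stronger than nonvanishing and immediately places the element outside the maximal divisible subgroup, so Theorem~\ref{ShaDivn} applies with no finiteness hypothesis whatsoever. The tool for proving $\ALB^1_X\notin\phi\Sha(J)$ is an explicit descent criterion (Lemma~\ref{NormCond}): the class is not $\phi$-divisible in $\Sha(J)$ whenever the constant $c$ in the model $y^p=cf(x)$ fails to be a norm from $k[x]/f(x)$ modulo $p$-th powers. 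The actual work is then a concrete number-theoretic construction --- a specific polynomial $f$ over $\Q(\zeta_p)$ having a root in every completion (forcing $\ALB^1_X$ into the image of $\Sha^1(J[\phi])$ by Lemma~\ref{xitrivial}) together with a Chebotarev-type argument showing that the relevant norm group misses infinitely many $c$. Your proposal contains no analogue of this step. A minor side remark: your third desideratum, that the class not be ``absorbed into $A^\vee(\Q)/p$'', is not an independent condition, since anything coming from $A^\vee(\Q)/p$ already maps to zero in $\Sha(A^\vee)$.
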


The key to these examples is the characterization of when $\Sha(A) \subset n\HH^1(A)$ given in Theorem \ref{ShaDivn} below. One consequence of this is an algorithm that, at least in principle, can determine whether $\Sha(A)[n] \subset n\HH^1(A)$ for any $n$. The example (see Theorem \ref{ECExample}) we give to prove Theorem \ref{MainTheorem2} is the curve of smallest conductor (it is $1025$) with this property. It was originally found using a crude implementation of the algorithm in {\tt Magma} \cite{magma}, though the verification we give here is different. The examples for Theorem \ref{MainTheorem1} are constructed using a slightly different approach. They come from Jacobians of cyclic covers of the projective line of genus $(p^3 - 3p + 2)/2$ defined over the $p$-th cyclotomic field. We obtain examples over $\Q$ (of dimension larger by a factor of $(p-1)$) using restriction of scalars.

\subsection*{Acknowledgments}
The author would like to thank Mirela \c Ciperiani and Jakob Stix for their comments, encouragement and useful discussions.

\section{Characterizing divisibility by $n$}
Let $\gal{k}$ denote the absolute Galois group of $k$. For a finite $\gal{k}$-module $M$ and a nonnegative integer $i$, let $\Sha^i(M)$ denote the kernel of the map $\HH^1(k,M) \to \prod\HH^1(k_v,M)$, the product running over all completions $k_v$ of $k$. When the base field is clear we will write $\HH^1(M)$ in place of $\HH^1(k,M)$.

It was Tate who showed that Cassels' pairing could be generalized to a pairing $\Sha(A) \times \Sha(\hat A) \to \Q/\Z$ whose left and right kernels are the divisible subgroups \cite{Tate} (Here $\hat A$ denotes the dual abelian variety). He notes that Cassels' methods suffice for the general case once one has recourse to his global duality theorems, and relates these to the pairing. The same tools allow one to characterize the divisibility of $\Sha(A)$ by $n$ in $\HH^1(A)$ using the pairing. While it is a simple observation, the fact that one can rule out divisibility in $\HH^1(A)$ this way seems to have gone unnoticed. 

\begin{Theorem}
\label{ShaDivn}
Let $A$ be an abelian variety over a number field and $n$ an integer. In order that $\Sha(A) \subset n\HH^1(A)$ it is necessary and sufficient that the image of the natural map $\Sha^1(\hat A[n]) \to \Sha(\hat A)$ is contained in the maximal divisible subgroup of $\Sha(\hat A)$.
\end{Theorem}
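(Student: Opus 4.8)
The plan is to exploit the exact sequence relating $\hat A[n]$-cohomology to $n$-torsion and $n$-divisibility in $H^1(\hat A)$, together with Tate's perfect duality pairing $\Sha(A)\times\Sha(\hat A)\to\Q/\Z$ whose kernels are the maximal divisible subgroups. Write $D(\hat A)$ for the maximal divisible subgroup of $\Sha(\hat A)$. First I would set up the commutative diagram coming from the Kummer sequence $0\to\hat A[n]\to\hat A\xrightarrow{n}\hat A\to 0$ over $k$ and over each completion $k_v$: this gives a connecting map $H^1(k,\hat A[n])\to H^1(\hat A)[n]$ which is surjective, with kernel the image of $\hat A(k)/n\hat A(k)$. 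Taking the analogous local sequences and passing to kernels of localization, one obtains that the image of $\Sha^1(\hat A[n])\to\Sha(\hat A)$ is exactly $\Sha(\hat A)[n]\cap\bigl(\text{image of }\Sha^1(\hat A[n])\bigr)$; more precisely, a standard diagram chase (this is essentially the long exact sequence in Poitou--Tate cohomology, or just the snake lemma applied to the localization maps) shows that the image of $\Sha^1(\hat A[n])$ in $\Sha(\hat A)$ equals the full $n$-torsion subgroup $\Sha(\hat A)[n]$. I would verify this surjectivity claim carefully, since it is the linchpin: given $c\in\Sha(\hat A)[n]$, lift it to $H^1(k,\hat A[n])$; the lift need not be locally trivial a priori, but its localizations lie in the image of $\hat A(k_v)/n$, and one corrects the lift using the exact sequence $0\to\hat A(k)/n\to H^1(k,\hat A[n])\to H^1(\hat A)[n]\to 0$ compared with its local analogues — the obstruction to finding a globally locally-trivial lift lives in a group that vanishes by Poitou--Tate duality (the relevant $\Sha^2$ or cokernel term).

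Granting that the image of $\Sha^1(\hat A[n])\to\Sha(\hat A)$ is precisely $\Sha(\hat A)[n]$, the theorem reduces to the purely formal statement:
\begin{equation*}
\Sha(A)\subset n\,H^1(A)\quad\Longleftrightarrow\quad \Sha(\hat A)[n]\subset D(\hat A).
\end{equation*}
For this I would use that Tate's pairing identifies $\Sha(A)/D(A)$ with the Pontryagin dual of $\Sha(\hat A)/D(\hat A)$, both being finite groups (assuming, as one may for the equivalence, that these quotients are finite — or working throughout with the pairing on the quotients). The condition $\Sha(A)\subset n H^1(A)$ is equivalent to $\Sha(A)$ being $n$-divisible as a subgroup of $H^1(A)$; since the localization maps $H^1(A)\to H^1(A_v)$ identify $\Sha(A)$ as a subgroup, and since $n H^1(A)\cap\Sha(A)$ can be analyzed via the same Kummer sequence, one shows $\Sha(A)\subset nH^1(A)$ iff the multiplication-by-$n$ map on $\Sha(A)/D(A)$ is surjective, i.e. iff $\Sha(A)/D(A)$ has no nontrivial $n$-torsion quotient, i.e. (by finiteness and duality) iff $\Sha(\hat A)/D(\hat A)$ has no nontrivial $n$-torsion subgroup, which is exactly $\Sha(\hat A)[n]\subset D(\hat A)$.

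The main obstacle I anticipate is the surjectivity of $\Sha^1(\hat A[n])\to\Sha(\hat A)[n]$ — making precise which Poitou--Tate term vanishes and checking it really is zero, rather than merely finite, requires care with the nine-term exact sequence and the compatibility of the local and global Kummer sequences. A secondary subtlety is the handling of divisibility when $\Sha$ is not known to be finite: one should phrase everything in terms of the maximal divisible subgroup directly, using that $D(\hat A)=\bigcap_m m\Sha(\hat A)$ and that an element of $\Sha(\hat A)[n]$ lies in $D(\hat A)$ iff it is infinitely $n$-divisible within $\Sha(\hat A)$, then transport this across the pairing. Once those two points are pinned down, the rest is a formal diagram chase of the kind that, as the paragraph preceding the theorem notes, is "a simple observation."
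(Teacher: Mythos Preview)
Your central claim --- that the image of $\Sha^1(\hat A[n])\to\Sha(\hat A)$ equals all of $\Sha(\hat A)[n]$ --- is false in general, and this is not a matter of care with Poitou--Tate but a genuine phenomenon. Given $c\in\Sha(\hat A)[n]$, any lift $\tilde c\in \HH^1(\hat A[n])$ has local restrictions lying in $\hat A(k_v)/n\hat A(k_v)$, but the resulting family need not come from $\hat A(k)/n\hat A(k)$; the obstruction lives in the cokernel of the diagonal map on Mordell--Weil groups modulo $n$, and this cokernel does not vanish. Concretely, in the paper's elliptic curve example one has $\Sha(E)\simeq(\Z/2\Z)^2$, and the Remark following Theorem~\ref{ECExample} shows that $T_2,T_3\in\Sha(E)[4]$ are \emph{not} in the image of $\Sha^1(E[4])$, while $T_1$ is. The very distinction the theorem is drawing --- between divisibility in $\HH^1(A)$ and divisibility in $\Sha(A)$ --- is exactly the gap between the image of $\Sha^1(\hat A[n])$ and $\Sha(\hat A)[n]$; see the Remark after Theorem~\ref{ShaDivphi}.

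Consequently your ``purely formal'' reduction is also wrong: $\Sha(A)\subset n\HH^1(A)$ is strictly weaker than $\Sha(A)=n\Sha(A)$ (again, $T_1\in 4\HH^1(E)$ but $4\Sha(E)=0$), so it is not equivalent to surjectivity of multiplication by $n$ on $\Sha(A)/D(A)$. The paper's argument avoids this by working on the $A$ side with the connecting map $\delta:\HH^1(A)\to\HH^2(A[n])$: the obstruction to lifting $T\in\Sha(A)$ to $\HH^1(A)$ is $\delta(T)\in\Sha^2(A[n])$, and Tate's global duality identifies $\Sha^2(A[n])$ with the Pontryagin dual of $\Sha^1(\hat A[n])$. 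One then checks that under this identification $\delta$ corresponds to the Cassels--Tate pairing against the image of $\Sha^1(\hat A[n])$ in $\Sha(\hat A)$. That compatibility (diagram~(\ref{diagram1})) is the substantive step; once you have it, $\delta(T)=0$ for all $T\in\Sha(A)$ iff every element in that image pairs trivially with all of $\Sha(A)$, i.e.\ lies in $D(\hat A)$.
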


Since the maximal divisible subgroup of $\Sha(\hat A)$ is the (right) kernel of the pairing Theorem \ref{ShaDivn} is a consequence of the following.

\begin{Theorem}
\label{ShaDivphi}
Suppose $\phi:B \to A$ is an isogeny of abelian varieties over a number field with dual isogeny $\hat\phi :\hat B \to \hat A$, and let $T \in \Sha(A)$.  There exists $T' \in \HH^1(B)$ such that $\phi T' = T$ if and only if $T$ is orthogonal to the image of the map $\Sha^1(\hat A[\hat\phi]) \to \Sha(\hat A)$ induced by the inclusion $\hat A[\hat\phi] \subset \hat A$.
\end{Theorem}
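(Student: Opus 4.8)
The plan is to deduce Theorem~\ref{ShaDivphi} from Poitou--Tate global duality applied to the finite Galois module $\hat A[\hat\phi]$, which is the Cartier dual of $A[\phi]$ via the Weil pairing coming from the isogeny. First I would set $M = A[\phi]$ and $M^\dual = \hat A[\hat\phi]$, so that there is a perfect local pairing $\HH^i(k_v, M) \times \HH^{2-i}(k_v, M^\dual) \to \HH^2(k_v,\mu) = \Q/\Z$ for each place $v$, and the nine-term Poitou--Tate exact sequence. The exact sequence $0 \to M \to B \xrightarrow{\phi} A \to 0$ of Galois modules gives a connecting map $\delta : A(k) \to \HH^1(k,M)$, and similarly over each completion; pushing forward along $M \injects B$ recovers the multiplication-by-(elements of $A[\phi]$) picture at the level of $\HH^1$. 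Taking the long exact cohomology sequence for this short exact sequence, a class $T \in \HH^1(A)$ lifts to $\HH^1(B)$ precisely when its image under the coboundary $\HH^1(k,A(\kbar)) \to \HH^2(k,M)$ vanishes. So the first step is to translate ``$T \in \phi\,\HH^1(B)$'' into ``$\partial T = 0$ in $\HH^2(k,M)$''.

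Next I would invoke the Poitou--Tate sequence, whose relevant segment reads
\[
\HH^1(k,M^\dual)^\wedge \;\longrightarrow\; \prod_v' \HH^1(k_v,M^\dual) \;\longrightarrow\; \HH^1(k,M)^\dual \;\longrightarrow\; \HH^2(k,M) \;\longrightarrow\; \bigoplus_v \HH^2(k_v,M),
\]
where the superscript $\dual$ denotes Pontryagin dual. Since $T \in \Sha(A)$, its localizations $T_v$ are trivial in $\HH^1(k_v,A)$, hence each local coboundary $\partial_v T \in \HH^2(k_v,M)$ vanishes; therefore $\partial T$ lies in the kernel of $\HH^2(k,M) \to \bigoplus_v \HH^2(k_v,M)$, which by the sequence above is the image of the map $\alpha : \HH^1(k,M)^\dual \to \HH^2(k,M)$. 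Thus $\partial T = 0$ if and only if $T$, viewed via $\HH^1(k,M) \to \HH^1(k,A)$ and the functoriality of the coboundary as an element pairing against $\HH^1(k,M)$, is orthogonal to $\ker\alpha$. The cokernel/kernel bookkeeping in the sequence identifies $\ker\alpha$ with the image of $\prod_v' \HH^1(k_v,M^\dual) \to \HH^1(k,M)^\dual$, and dualizing the local-global pairing shows that the orthogonal complement of $\partial T$ detecting whether it vanishes is exactly the \emph{Tate--Shafarevich} subgroup $\Sha^1(M^\dual) = \Sha^1(\hat A[\hat\phi])$, paired against $\Sha(A)$ via the Cassels--Tate pairing. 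So the middle step is to check that the pairing $\Sha(A) \times \Sha^1(M^\dual) \to \Q/\Z$ arising this way agrees (up to sign) with the restriction to $\Sha^1(\hat A[\hat\phi])$ of the Cassels--Tate pairing $\Sha(A) \times \Sha(\hat A) \to \Q/\Z$ composed with $\Sha^1(\hat A[\hat\phi]) \to \Sha(\hat A)$ --- this is essentially the compatibility Tate records in \cite{Tate}.

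Finally I would assemble: $T \in \phi\,\HH^1(B)$ $\iff$ $\partial T = 0$ in $\HH^2(k,M)$ $\iff$ $T$ is orthogonal (under Cassels--Tate) to the image of $\Sha^1(\hat A[\hat\phi]) \to \Sha(\hat A)$, which is the claim. The main obstacle I anticipate is the second step: pinning down the precise identification of the coboundary-composed-with-Poitou--Tate pairing with the Cassels--Tate pairing restricted to $\Sha^1(\hat A[\hat\phi])$, including getting the signs and the direction of the arrows right, and making sure the diagram
\[
\begin{array}{ccc}
\Sha^1(\hat A[\hat\phi]) & \longrightarrow & \Sha(\hat A)\\
\big\downarrow & & \big\downarrow\\
\HH^1(k,M^\dual)^{\vphantom{X}} & & \HH^1(\hat A)
\end{array}
\]
commutes with the relevant pairings on $\HH^1(A)$. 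One also needs the standard input that for $T \in \Sha(A)$ the local obstructions in $\HH^2(k_v,M)$ genuinely vanish, which uses that $\HH^1(k_v,A) \to \HH^2(k_v,M)$ is injective on the relevant subquotient, i.e.\ that a locally trivial $T$ has locally liftable class --- a consequence of the local surjectivity $\HH^1(k_v,B) \surjects \HH^1(k_v,A)$ being automatic here precisely because $T_v = 0$. Beyond that the argument is a bookkeeping exercise with the Poitou--Tate sequence and duality, exactly as Tate indicates in \cite{Tate}.
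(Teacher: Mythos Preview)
Your proposal is correct and essentially identical to the paper's proof: both reduce lifting $T$ to the vanishing of the coboundary $\delta(T)\in\HH^2(k,B[\phi])$ (note your $M$ should be $B[\phi]$, the kernel of $\phi:B\to A$, rather than ``$A[\phi]$''), observe that $\delta(T)\in\Sha^2(B[\phi])$ since $T$ is locally trivial, invoke the Poitou--Tate duality $\Sha^2(B[\phi])\cong\Sha^1(\hat A[\hat\phi])^*$, and then identify the resulting functional on $\Sha^1(\hat A[\hat\phi])$ with the Cassels--Tate pairing restricted along $\Sha^1(\hat A[\hat\phi])\to\Sha(\hat A)$. The paper packages this last step as the commutativity of a single square and, exactly as you anticipate, flags the compatibility of Tate's duality isomorphism with the Cassels--Tate pairing as the one point requiring genuine verification.
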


\begin{proof}
Let $\delta : \HH^1(k,A) \to \HH^2(k,B[\phi])$ be the connecting homomorphism arising from the exact sequence $0 \to B[\phi] \to B \stackrel{\phi}\to A \to 0$. There exists a lift of $T$ as in the statement if and only if $\delta(T) = 0$. The kernels of $\phi$ and $\hat\phi$ are dual (as $\gal{k}$-modules), so Tate's duality theorem \cite[Theorem 3.1]{Tate} gives an isomorphism $\Sha^2(B[\phi]) \cong \Sha^1(\hat A[\hat\phi])^*$, where for an abelian group $G$, $G^* = \Hom(G,\Q/\Z)$. On the other hand, the Cassels--Tate pairing gives a map $\Sha(A) \to \Sha(\hat A)^*$. The `Weil pairing definition' of the Cassels--Tate pairing in \cite[Section 12.2]{PoonenStoll} can easily be extended to the case of an arbitrary isogeny in place of multiplication by an integer. Upon comparison of this definition with Tate's explicit description of his duality theorem as given in \cite[Section 3]{Tate} the reader may convince themselves that following diagram commutes.

\begin{align}
\label{diagram1}
\xymatrix{\Sha(A) \ar[r]^\delta \ar[d]^{} & \Sha^2(B[\phi]) \ar[d]^{\cong} \\
\Sha(\hat A)^* \ar[r] &\Sha^1(\hat A[\hat\phi])^*}
\end{align} Here the lower map is induced by the inclusion $\hat A[\hat\phi] \subset \hat A$.

Now suppose $T \in \Sha(A)$. Since $T$ is locally trivial, $\delta(T) \in \Sha^2(B[\phi])$. Since the vertical map on the right is an isomorphism, $\delta(T)$ is trivial if and only if its image in $\Sha^1(\hat A[\hat\phi])^*$ is. By commutativity, this image is the character
\[ \Sha^1(\hat A[\hat\phi]) \to \Sha(\hat A) \stackrel{ \langle T, \bullet\rangle}\To \Q/\Z\,. \] From this the statement is clear.
\end{proof}

\begin{Remark}
In regard to Theorem \ref{ShaDivphi}, it is well known that the stronger requirement that $T'$ lie in $\Sha(B)$ can be met if and only if $T$ is orthogonal to $\Sha(\hat A)[\hat\phi_*]$ (see e.g. \cite[Lemma 4.1]{CasselsIV}, \cite[Corollary to Theorem 1.2]{CasselsVIII}, \cite[Lemma 2.5]{CreutzANTS} or \cite[Lemma I.6.17]{ADT}). The proof of this fact, as well as the general proof that the kernels of the pairing are the maximal divisible subgroups essentially boils down to establishing the commutativity of diagram (\ref{diagram1}) above.
\end{Remark}

\begin{Remark}
In regard to Theorem \ref{ShaDivn}, the fact that the vanishing of $\Sha^1(\hat A[n])$ implies that the elements of $\Sha(A)$ are divisible by $n$ in $\HH^1(A)$ has been applied many times over. The results of Bashmakov, \c Ciperiani-Stix and Tate's result that $\Sha(E) \subset p\HH^1(E)$ all reduce to establishing that $\Sha^1(\hat A[n])$ is trivial under suitable circumstances. 
\end{Remark}

\begin{Remark}
From the Kummer sequence, \[ 0 \to \hat A(k) / n \hat A(k) \to \HH^1(\hat A[n]) \to \HH^1(\hat A)[n] \to 0\,,\] it is clear that the vanishing of $\Sha^1(\hat{A}[n])$ implies a local-global principle for divisibility by $n$ in $\hat A(k)$. This has been studied by Dvornicich and Zannier \cite{DZ}. If one is willing to assume that $\Sha(\hat A)$ contains no nontrivial elements, then Theorem \ref{ShaDivn} may be interpreted as saying that the nontrivial elements of $\Sha^1(\hat A[n])$ always pose a nontrivial obstruction to the local-global principle for divisibility by $n$, either in $\hat A(k)$ or in $\HH^1(A)$.
\end{Remark}

\section{The Example for Theorem \ref{MainTheorem2}}

\begin{Theorem}
\label{ECExample}
Let $E$ be the elliptic curve over $\Q$ with Weierstrass equation \[ E : y^2 = x(x+80)(x+205)\,.\] Then $\Sha(E) \not\subset 4\HH^1(E)$.
\end{Theorem}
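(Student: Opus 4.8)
The plan is to invoke Theorem \ref{ShaDivn} with $n=4$, using the principal polarization to identify $\hat E$ with $E$: it then suffices to show that the image of the natural map $\Sha^1(E[4]) \to \Sha(E)$ is not contained in the maximal divisible subgroup of $\Sha(E)$. The first step is to dispose of the divisible subgroup by showing $\Sha(E)$ is finite. A numerical evaluation shows $L(E,1)\neq 0$, so $E$ has analytic rank $0$; since $E$ is modular, the theorem of Gross--Zagier and Kolyvagin gives that $E(\Q)$ is finite and $\Sha(E)$ is finite, so its maximal divisible subgroup is trivial. We are thus reduced to exhibiting a single class in $\HH^1(\Q,E[4])$ that becomes trivial over every completion of $\Q$ but whose image in $\HH^1(\Q,E)$ is nonzero.

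Such a class will be manufactured by a two-stage descent. Since the cubic $x(x+80)(x+205)$ splits over $\Q$, we have $E[2]\cong(\Z/2)^2$ with trivial Galois action and $\HH^1(\Q,E[2])$ is a subgroup of $\umsq{\Q}\times\umsq{\Q}$. The differences of the roots, $-80=-2^4\cdot5$, $-205=-5\cdot41$, $-125=-5^3$, show that $\Sel^2(E)$ consists of classes unramified outside $\{2,5,41\}$ subject to local conditions at $2$, $5$, $41$ and $\infty$, and that the $4$-division field $\Q(E[4])$ is a multiquadratic field ramified only at $2$, $5$, $41$; moreover $E$ has no rational point of order $4$ (the points halving the $2$-torsion have $x$-coordinates $\pm20\sqrt{41}$, $-80\pm100\sqrt{-1}$ and $-205\pm25\sqrt{41}$), so $E(\Q)/2E(\Q)\cong E(\Q)/4E(\Q)\cong(\Z/2)^2$. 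Carrying out the $2$-descent identifies $\Sel^2(E)$, which strictly contains the image of $E(\Q)/2E(\Q)$, so that $\Sha(E)[2]\neq 0$. For $\bar\sigma\in\Sha(E)[2]$ represented by $\sigma\in\Sel^2(E)$, the class $j(\sigma)\in\HH^1(\Q,E[4])$, where $j$ is induced by $E[2]\subset E[4]$, has the same image $c=\bar\sigma$ in $\HH^1(\Q,E)$ as $\sigma$ and depends only on $\bar\sigma$; from $0\to E[2]\to E[4]\stackrel{2}\to E[2]\to 0$ one sees that $j(\sigma)$ is trivial over $\Q_v$ exactly when $\sigma|_v$ lies in the image of the connecting map $E(\Q_v)[2]\to\HH^1(\Q_v,E[2])$. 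The second stage is to find $\bar\sigma\neq 0$ for which this holds at every place $v$; since $\Sha^1(E[2])=0$, such a $\sigma$ is automatically not globally in the image of that connecting map, so $j(\sigma)\neq 0$, and then $j(\sigma)\in\Sha^1(E[4])$ maps to $c=\bar\sigma\neq 0$ in $\Sha(E)$, which by Theorem \ref{ShaDivn} yields $\Sha(E)\not\subset 4\HH^1(E)$.

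I expect the second stage to be the real obstacle. The local condition ``$\sigma|_v$ lies in the image of $E(\Q_v)[2]\to\HH^1(\Q_v,E[2])$'' has to be verified at $v=2$, $5$, $41$ and $\infty$, which requires an explicit description of $E(\Q_v)/4E(\Q_v)$ and of the relevant $2$- and $4$-coverings of $E$ over these fields; the delicate cases are $v=2$, where one must work with the formal group and a Hensel-type analysis, $v=\infty$, and $v=41$, the prime attached to $\sqrt{41}$ over which the curves $2$-isogenous to $E$ acquire their extra $2$-torsion and which therefore governs much of the structure of $E[4]$. At the remaining (good) primes the condition is automatic at those where $E$ has no local point of order $4$; at the ones where it does --- a set controlled by the splitting behaviour in $\Q(E[4])$ --- compatibility must also be checked, which comes down to arranging that $\bar\sigma$, as a pair of square classes, is supported inside $\Q(E[4])$. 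Finally one keeps $E(\Q)/2E(\Q)$ under control throughout so as to be certain the class produced is genuinely nontrivial in $\Sha(E)$ rather than the image of a rational point; the finiteness of $E(\Q)$ is what makes this bookkeeping manageable.
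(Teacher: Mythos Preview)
Your overall strategy is exactly the one the paper uses: invoke Theorem~\ref{ShaDivn} with $n=4$, use analytic rank~$0$ to kill the divisible part of $\Sha(E)$, and then produce a class in $\HH^1(\Q,E[2])$ whose push-forward to $\HH^1(\Q,E[4])$ lies in $\Sha^1(E[4])$ but maps nontrivially to $\Sha(E)$. The reduction via the sequence $0\to E[2]\to E[4]\to E[2]\to 0$ to the local criterion ``$\sigma|_v$ lies in the image of $E(\Q_v)[2]\to\HH^1(\Q_v,E[2])$'' is also the paper's.

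Where your proposal diverges is in the execution, and here you are making the problem harder than it is. You plan to run a full $2$-descent, isolate $\Sha(E)[2]$, and then test its elements against local conditions that you expect to require knowledge of $E(\Q_v)/4E(\Q_v)$, formal group computations at $2$, and a classification of the good primes where $E$ acquires local $4$-torsion. None of this is needed. The paper simply writes down the Kummer images of the three nonzero points of $E[2]$ as pairs in $\umsq{\Q}\times\umsq{\Q}$, namely $(41,5)$, $(-5,-1)$, $(-205,-5)$, and proposes the candidate $\xi=(1,5)$. The entire local verification is then a four-line case split on which of $5$, $41$, $-5$ lies in $\Q_p^{\times 2}$: at each prime one of the four pairs $(1,1),(41,5),(-5,-1),(-205,-5)$ visibly represents $(1,5)$. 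No descent, no formal groups, no determination of $\Q(E[4])$, and no separate treatment of bad versus good primes. That $\xi$ is a Selmer class is a by-product of this check; that its image in $\Sha(E)$ is nonzero follows because $E(\Q)=E[2]$ and $(1,5)$ is not among the four global images.

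Two small points. Your invocation of $\Sha^1(E[2])=0$ to conclude $j(\sigma)\neq 0$ is a detour: once you assume $\bar\sigma\neq 0$ and know $E(\Q)=E[2]$, the global connecting map and the global Kummer map agree on $E[2]$, so $\sigma$ is not in its image and $j(\sigma)\neq 0$ directly. And your remark that the local condition is automatic at good primes with no local $4$-torsion is correct for Selmer classes, but you never identify which primes those are; the paper sidesteps this by treating all primes uniformly. In short: right plan, but the proof is not yet there, and the missing part is much easier than you anticipate.
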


\begin{Remark}
This is the smallest elliptic curve over $\Q$ (ordered by conductor) with this property. It is labelled {\tt 1025b2} in Cremona's Database \cite{CremonaDB}. 
\end{Remark}

\begin{proof}
We fix the ordered basis $P_1 = (0,0)$, $P_2 = (-80,0)$ for $E[2]$. For any $K/\Q$, this gives us an isomorphism $\HH^1(K,E[2]) \simeq \left(K^\times/K^{\times 2}\right)^2$ whose composition with the connecting homomorphism $\delta: E(K) \to \HH^1(K,E[2])$ is given explicitly by 
\[ P = (x,y) \longmapsto 
\begin{cases} 
(x,x+80) & \mbox{if } P \ne P_1,P_2 \\
(41,5) &  \mbox{if } P = P_1 \\
(-5,-1) & \mbox{if } P = P_2 \\
(1,1) & \mbox{if } P = 0 
\end{cases} \]
Let us consider the class $\xi \in \HH^1(\Q,E[2])$ represented by $(1,5)$. We claim that for every prime $p$, the image of $\xi$ in $\HH^1(\Q_p,E[2])$ lies in the image of $E[2]$ under the local connecting homomorphism $E(\Q_p) \to \HH^1(\Q_p,E[2])$. Assuming this we can complete the proof as follows. For any $K/\Q$ the exact sequence \[0 \to E[2] \to E[4] \to E[2] \to 0\] induces an exact sequence \[E[2] \stackrel{\delta}\to \HH^1(K,E[2]) \to \HH^1(K,E[4])\,.\] Thus our claim implies that the image of $\xi$ in $\HH^1(\Q,E[4])$ lies in $\Sha^1(E[4])$. On the other hand, $E$ has analytic rank $0$. This implies that $\Sha(E)$ is finite and, in particular, that it contains no nontrivial divisible elements. Also since $E(\Q) = E[2]$ and $\xi$ is clearly not contained in $\delta(E[2])$, the image of $\xi$ in $\Sha(E)$ is nontrivial. The result then follows from Theorem \ref{ShaDivn}.

It remains to establish the claim. Equivalently we must show that for every $p$, the image $\xi_p$ of $(1,5)$ in $(\Q_p/\Q_p^{\times 2})^2$ is represented by at least one of the pairs $(1,1), (41,5), (-5,-1)$, or $(-205,-5)$. If $5 \in \Q_p^{\times 2}$ or if $41 \in \Q_p^{\times 2}$, then $\xi_p$ is clearly represented by the first or by the second pair, respectively. This covers the case when $p \in \{ 2, 5, 41, \infty\}$. For any other $p$, all the entries lie in $\Z_p^\times$, which has exactly $2$ square classes. If $-5 \in \Q_p^{\times 2}$, then $-1$ and $5$ have the same square class, and we see that $\xi_p$ is represented by the third pair. If none of $5$, $-5$ or $41$ is a square in $\Q_p^\times$, then they all have the same square class and $-205 = (-5)\cdot 41 \in \Q_p^{\times 2}$, so $\xi_p$ is represented by the fourth pair. The cases considered are exhaustive, so the claim is proven.
\end{proof}

\begin{Remark} In the example $\Sha(E) \simeq \Z/2\Z\times \Z/2\Z$, and it is entirely possible to write down models for the corresponding torsors as double covers of $\PP^1$. The torsor corresponding to $\xi$ is given by
\begin{align*}
T_1 : y^2 &= (11x^2 - 67x + 31)(-x^2 - 3x - 1)\,.
\end{align*} The other two nontrivial elements are given by
\begin{align*}
T_2 : y^2 &= (11x^2 - 34x + 19)(x^2 + 6x + 4)\,,\text{ and }\\
T_3 : y^2 &= (11x^2 - 89x - 11)(-x^2 - x + 1)\,.
 \end{align*}
Since there are no elements of order $4$, $T_1$ pairs nontrivially with both $T_2$ and $T_3$. So by Theorem \ref{ShaDivphi} we see that $T_2,T_3 \not\in 4\HH^1(E)$. On the other hand, arguing as in the proof it is possible to show that neither $T_2$ nor $T_3$ lies in the image of $\Sha^1(E[4])$. Thus $T_1 \in 4\HH^1(E)$.
\end{Remark}

\section{The Examples for Theorem \ref{MainTheorem1}}
As mentioned in the introduction, our examples are Jacobians of cyclic covers of the projective line. We begin by fixing some notation. Let $X$ be a cyclic cover of $\PP^1$ of prime degree $p$ defined over a field $k$ of characteristic prime to $p$ containing the $p$-th roots of unity. We will assume that the covering is not ramified above $\infty \in \PP^1$ (this can always be arranged by a change of the coordinate on $\PP^1$ when $k$ is infinite). By Kummer theory, $X$ has a (possibly singular) model of the form $y^p = cf(x)$ where $c \in k^\times$ and $f \in k[x]$ has degree divisible by $p$ and leading coefficient $1$. Let $\Omega$ denote the set of ramification points. The $0$-cycles of degree $0$ supported on $\Omega$ generate a subgroup $J_\phi$ of the the $p$-torsion subgroup of the Jacobian $J = \Jac(X)$. There is an action of $\mu_p$ on $X/\PP^1$ given by $\zeta\cdot(x,y) = (x,\zeta y)$, which induces an inclusion of the cyclotomic ring $\Z[\mu_p]$ in $\End(J)$. The subgroup $J_\phi$ is the kernel of the isogeny $\phi = (1 - \zeta) \in \End(J)$ where $\zeta$ is a (hereupon fixed) primitive $p$-th root of unity.

For any $\omega \in \Omega$ the $1$-cocycle $\xi: \gal{k} \ni \sigma \mapsto \left(\sigma(\omega) - \omega\right) \in J_\phi$ represents a class in $\HH^1(J_\phi)$, which we will again denote by $\xi$. The class of this cocycle does not depend on the choice for $\omega$. One can easily determine if it is trivial with the following (see \cite[Lemma 11.2]{PoonenSchaefer}).

\begin{Lemma}
\label{xitrivial}
$\xi$ is trivial if and only if 
\begin{enumerate}
\item $f$ has a factor of degree prime to $p$, or
\item $p = 2$, $\deg(f) \equiv 2 \mod 4$ and $f(x)$ factors over some quadratic extension $K$ as $f = cg\bar{g}$ where $g, \bar{g} \in K[x]$ are $\Gal(K/k)$ conjugates.
\end{enumerate}
\end{Lemma}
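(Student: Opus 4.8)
The plan is to understand exactly when the cocycle $\xi \colon \sigma \mapsto (\sigma(\omega) - \omega)$ is a coboundary in $\HH^1(\gal{k}, J_\phi)$. Since $J_\phi$ is generated by the degree-$0$ divisors supported on the ramification set $\Omega$, a coboundary means there is a fixed divisor $D_0 \in J_\phi$ (equivalently, a degree-$0$ divisor supported on $\Omega$ whose class is killed by $\phi$, but in fact every element of $J_\phi$ is represented that way) with $\sigma(\omega) - \omega = \sigma(D_0) - D_0$ in the Jacobian for all $\sigma$. First I would reinterpret this: setting $D = \omega - D_0$, triviality of $\xi$ is equivalent to the existence of a $\gal{k}$-fixed divisor class $[D] \in \Pic(X)$ with $\phi$-part matching that of $\omega$, and more concretely to finding a $k$-rational divisor on $X$ of the right degree supported appropriately over $\Omega$. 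The cleanest route is to use the identification, coming from the theory of these covers (as in Poonen--Schaefer), of $\HH^1(\gal{k}, J_\phi)$ with a quotient of $k[x]/f \otimes \mu_p$-type data, under which $\xi$ corresponds to the class of the image of the coordinate function $x - (\text{root})$; but since the statement only asks for a triviality criterion I would instead argue geometrically.

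The key steps, in order. Step one: reduce to the statement that $\xi$ is trivial if and only if there is a $k$-rational effective divisor on $X$ whose pushforward to $\PP^1$ is a $k$-rational effective divisor $E$ on $\PP^1$ with $E$ disjoint from the branch locus and $\deg E$ coprime to $p$ — this is the divisor realizing $[D_0]$ together with a multiple of the hyperelliptic-type class, combined with the fact that $p \mid \deg f$. Step two: analyze when such an $E$ exists in terms of the factorization of $f$. A point of $\PP^1$ not in the branch locus has exactly one preimage in $X$ if and only if $c f$ evaluated there is a $p$-th power in the residue field; working with the generic factor, a monic irreducible factor $h \mid f$ of degree coprime to $p$ gives, via a suitable divisor supported over the roots of $h$, exactly the needed class — this yields case (1). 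Step three: handle the exceptional case $p = 2$. Here $-1$ and sign issues intervene: when $\deg f \equiv 2 \pmod 4$ the "obvious" divisor of degree $\deg f /2$ supported over half the roots of $f$ need not be $k$-rational, but if $f = c g \bar g$ with $g, \bar g$ conjugate over a quadratic extension $K$, the divisor class $[g] - [\bar g]$ (or rather its image in $J_\phi$) is $\gal{k}$-invariant and realizes $D_0$; conversely a parity/discriminant computation shows that when $\deg f \equiv 2 \pmod 4$ and $f$ has no odd-degree factor, triviality forces exactly this conjugate-factorization structure. For the converse directions one checks that in each listed case the exhibited divisor genuinely trivializes $\xi$, which is a direct cocycle computation.

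The main obstacle I expect is the $p=2$, $\deg f \equiv 2 \pmod 4$ case — precisely pinning down why the naive construction fails and why the conjugate-pair factorization is both necessary and sufficient. This is a subtle parity phenomenon: for odd $p$, or for $p = 2$ with $4 \mid \deg f$, one can always split off a rational effective divisor of degree coprime to $p$ from any factor, but when $\deg f \equiv 2 \pmod 4$ the relevant class in $J_\phi / (\text{image of rational divisors})$ has an extra $\Z/2$ ambiguity governed by whether $f$ (up to the constant $c$) is a norm from a quadratic extension. I would make this precise by computing in the explicit cohomological model: $\HH^1(\gal k, J_\phi)$ sits inside $(k[x]/f)^\times/(k[x]/f)^{\times 2}$ (for $p=2$) modulo $k^\times$, and $\xi$ is the class of $x$ (the image of the coordinate); it is trivial iff $x \equiv c \cdot (\text{square}) \pmod{k^\times}$ in that algebra, and unwinding this congruence via the Chinese remainder decomposition over the irreducible factors of $f$ gives exactly the dichotomy (1)--(2). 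The odd-$p$ analysis is analogous but strictly easier, since there is no sign obstruction. Once the explicit model is set up, the remaining verifications are routine, so I would cite \cite[Lemma 11.2]{PoonenSchaefer} for the bookkeeping and focus the written proof on the reduction and the $p=2$ parity point.
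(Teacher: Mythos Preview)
The paper does not give its own proof of this lemma; it simply states the result with a parenthetical reference to \cite[Lemma 11.2]{PoonenSchaefer}. Your proposal sketches the underlying argument and ultimately defers to the same citation, so it is entirely in line with the paper's approach.
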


The image of $\xi$ under the natural map $\HH^1(J_\phi) \to \HH^1(J)$ is represented by the torsor $\ALB^1_{X}$ paremeterizing $0$-cycles of degree $1$ on $X$. In the global situation, Lemma \ref{xitrivial} gives us a way to ensure that $\ALB^1_X$ lies in the image of $\Sha^1(J_\phi) \to \Sha(J)$. For example, if we choose $f$ so as to have a root in every completion this will be the case. The more involved task of arranging that $\ALB^1_X$ is not divisible in $\Sha(J)$ will be achieved with the following.

\begin{Lemma}
\label{NormCond}
Suppose $k$ is a number field and that $X$ has points everywhere locally. Let $L$ denote the $k$-algebra $k[x]/f$, and let $N_{L/k} : L \to k$ denote the norm. If $c \notin N_{L/k}(L^\times)k^{\times p}$, then $\ALB^1_X \not\in \phi\Sha(J)$.
\end{Lemma}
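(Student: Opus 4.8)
The plan is to argue by contradiction, using Theorem~\ref{ShaDivphi} together with the explicit description of $J[\phi]$ due to Poonen and Schaefer. First I would record two consequences of local solubility. A $k_v$-point of $X$ is in particular a degree-one zero-cycle on $X$, hence determines a $k_v$-point of $\ALB^1_X$; so $\ALB^1_X$ is everywhere locally trivial and lies in $\Sha(J)$ (without this one could not even speak of $\phi$-divisibility in $\Sha(J)$). Moreover, writing $\theta$ for the image of $x$ in $L=k[x]/f$, a $k_v$-point $(x_0,y_0)$ of $X$ with $f(x_0)\neq 0$ (which exists, since $X$ is a smooth curve with a $k_v$-point) satisfies $c=y_0^p\,N_{L/k}(x_0-\theta)^{-1}$, so $c\in N_{L/k}(L_v^\times)\,k_v^{\times p}$ for every $v$; thus the hypothesis $c\notin N_{L/k}(L^\times)k^{\times p}$ expresses a genuine failure of a local--global principle. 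Now suppose $\ALB^1_X=\phi T'$ for some $T'\in\Sha(J)$. Applying Theorem~\ref{ShaDivphi} to the isogeny $\phi:J\to J$ with dual $\hat\phi:\hat J\to\hat J$, the class $\ALB^1_X$ is forced to be orthogonal, under the Cassels--Tate pairing $\Sha(J)\times\Sha(\hat J)\to\Q/\Z$, to the image of the natural map $\Sha^1(\hat J[\hat\phi])\to\Sha(\hat J)$. I would contradict this by producing a class in $\Sha^1(\hat J[\hat\phi])$ pairing nontrivially with $\ALB^1_X$.

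The class in question is manufactured from $c$. Write $\xi\in\HH^1(k,J[\phi])$ for the class of the cocycle $\sigma\mapsto\sigma\omega-\omega$, whose image in $\HH^1(k,J)$ is $\ALB^1_X$. Using the Poonen--Schaefer description \cite{PoonenSchaefer} of the $\gal{k}$-modules $J[\phi]=J_\phi$ and $\hat J[\hat\phi]$ and of the Weil pairing $e_\phi:J[\phi]\times\hat J[\hat\phi]\to\mu_p$ in terms of the roots of $f$ (equivalently, of the algebra $L$), the constant $c\in k^\times$ — which has $p$-power norm from $L$, as $p\mid\deg f$ — determines a natural class $\eta_c\in\HH^1(k,\hat J[\hat\phi])$. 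I would then establish: (i) $\eta_c$ is everywhere locally trivial, so $\eta_c\in\Sha^1(\hat J[\hat\phi])$; and (ii) the Cassels--Tate pairing of $\ALB^1_X$ with the image of $\eta_c$ in $\Sha(\hat J)$ equals the class of $c$ in $k^\times/N_{L/k}(L^\times)k^{\times p}$, this group being embedded in $\Q/\Z$ via the sum of local invariants (equivalently, via the duality for the norm-one torus $R^1_{L/k}\Gm$). Part (i) comes down to checking that, in the relevant local exact sequence, the obstruction to $c$ lifting to $\HH^1(k_v,\hat J[\hat\phi])$ is exactly membership of $c$ in $N_{L/k}(L_v^\times)k_v^{\times p}$, which holds by the first paragraph. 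For (ii), I would compute the pairing via the ``Weil pairing definition'' of the Cassels--Tate pairing (\cite[\S12.2]{PoonenStoll}, extended to an arbitrary isogeny as in the proof of Theorem~\ref{ShaDivphi}): feeding in the explicit formula for $e_\phi$ together with the local cycles on $X$ supplied by local solubility, $\langle\ALB^1_X,\eta_c\rangle$ becomes $\sum_v\inv_v$ of a family of cyclic-algebra-type classes built from $c$, and this sum reads off precisely the class of $c$ modulo $N_{L/k}(L^\times)k^{\times p}$. Since $c\notin N_{L/k}(L^\times)k^{\times p}$ by hypothesis this is nonzero, contradicting orthogonality; hence $\ALB^1_X\notin\phi\Sha(J)$.

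The main obstacle is step (ii): correctly matching the Cassels--Tate pairing with the norm-residue invariant of $c$. This relies on three ingredients: (a) the Poonen--Schaefer descriptions of $J[\phi]$, $\hat J[\hat\phi]$ and $e_\phi$; (b) a careful cocycle-level pass through the ``Weil pairing'' recipe for the Cassels--Tate pairing used in the proof of Theorem~\ref{ShaDivphi}, tracking which local cycle on $X$ trivializes $\ALB^1_X$ over each $k_v$; and (c) the class field theory input identifying $k^\times/N_{L/k}(L^\times)k^{\times p}$ with the obstruction, detected by $\sum_v\inv_v$, to $c$ being globally a norm times a $p$-th power once it is so everywhere locally — this is the point at which $L$ being merely \'etale, rather than cyclic, over $k$ matters, and where the failure of the Hasse norm principle enters. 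Each ingredient is routine in spirit but bookkeeping-heavy. I would also treat the case $p=2$ separately, since there $J[\phi]$ carries the extra relations reflected in Lemma~\ref{xitrivial}(2) and the cleanest form of the computation may require $\deg f\equiv 0\pmod 4$.
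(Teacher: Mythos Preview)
Your route is genuinely different from the paper's. The paper does not touch the Cassels--Tate pairing at all: it invokes \cite[Theorem~4.6]{CreutzANTS} and sketches the underlying descent picture. Namely, a lift $T'\in\Sha(J)$ with $\phi T'=\ALB^1_X$ is the same datum as an everywhere locally soluble $\ALB^1_X$-torsor under $J_\phi$; via geometric class field theory this transfers to an $X$-torsor under $J_\phi$ with the extra property that the pullbacks of the ramification points are $k$-rational divisor classes (property~\textbf{P}); and every $X$-torsor with property~\textbf{P} arises from an explicit $\alpha\in L^\times$ with $cN_{L/k}(\alpha)\in k^{\times p}$. The norm condition drops out of this chain with no pairing computation.

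Your approach via Theorem~\ref{ShaDivphi} is more ambitious, but there is a structural mismatch you should notice. Theorem~\ref{ShaDivphi} characterises liftability to $\HH^1(J)$, not to $\Sha(J)$; so if your step~(ii) were correct you would in fact prove the stronger assertion $\ALB^1_X\notin\phi\HH^1(J)$. But the paper's argument uses in an essential way that the putative lift $T'$ lies in $\Sha(J)$: it is precisely the everywhere local solubility of the associated $\ALB^1_X$-torsor that forces property~\textbf{P} on the corresponding $X$-torsor and hence produces the $\alpha$ witnessing $c\in N_{L/k}(L^\times)k^{\times p}$. A lift $T'\in\HH^1(J)\setminus\Sha(J)$ still yields an $\ALB^1_X$-torsor under $J_\phi$, but there is no reason it should satisfy property~\textbf{P}, and no norm relation is extracted. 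So the lemma as stated does not obviously upgrade to ``$\notin\phi\HH^1(J)$'', and if it does not upgrade in some instance then your formula in~(ii) must fail there: by Theorem~\ref{ShaDivphi} the pairing $\langle\ALB^1_X,\eta_c\rangle$ would vanish while $c$ remains a non-norm. Relatedly, your claim that $k^\times/N_{L/k}(L^\times)k^{\times p}$ ``embeds in $\Q/\Z$ via the sum of local invariants'' needs to be made precise; the local pieces all vanish by your own first paragraph, so it is not clear what invariants you are summing.

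If you want to stay on the pairing side, the correct criterion for $\phi$-divisibility \emph{in $\Sha$} is the one in the Remark after Theorem~\ref{ShaDivphi}: orthogonality to $\Sha(\hat J)[\hat\phi_*]$ rather than to the image of $\Sha^1(\hat J[\hat\phi])$. You would then need to show that the image of $\eta_c$ in $\Sha(\hat J)$ is nonzero and pairs nontrivially with $\ALB^1_X$ --- which is no easier than the descent argument and in effect reproves the content of \cite[Theorem~4.6]{CreutzANTS} in dual language.
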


This follows directly from \cite[Theorem 4.6]{CreutzANTS}, and the details are to be found there. We simply summarize the principal train of thought. There is an explicit construction which, given $\alpha \in L^\times$ such that $cN(\alpha) \in k^{\times p}$, produces an unramified covering of $X$. The coverings produced this way are in fact torsors under $J_\phi$ (whose {\em type} in the sense of Colliot-Th\'el\`ene and Sansuc's theory of torsors under groups of multiplicative type is given by $J_\phi \hookrightarrow J(\bar{k})= \Pic^0(\bar{X}) \subset \Pic(\bar{X})$).  One can show that every such torsor with the property (call it {\bf P}) that the pullbacks of the ramification points are linearly equivalent to $k$-rational divisors arises in this way. Geometric class field theory gives a bijective correspondence between abelian $X$-torsors and abelian $\ALB^1_X$-torsors. One can show that any $\ALB^1_X$-torsor of type $J_\phi$ that has points everywhere locally corresponds to an $X$-torsor of type $J_\phi$ with property {\bf P} (assuming $X$ has points everywhere locally). On the other hand, a $k$-torsor $T$ under $J$ can be made into an $\ALB^1_X$-torsor under $J_\phi$ if and only if $\phi T = \ALB^1_X$ in $\HH^1(J)$. Thus $\ALB^1_X$ lies in $\phi \Sha(J)$ if and only there is some $\ALB^1_X$-torsor under $J_\phi$ that has points everywhere locally. As described above, this would imply that $c$ is a norm modulo $p$-th powers.

\begin{Remark}
In the case $p = 2$, Bruin and Stoll asked \cite[Question 7.2]{BruinStoll} if it were possible to find $X$ that is everywhere locally solvable for which $c$ is not a norm modulo squares. It was this question that originally motivated our construction of the examples in the following theorem.
\end{Remark}

\begin{Theorem}
\label{JacExamples}
Let $k = \Q(\zeta)$ where $\zeta$ is a primitive $p$-th root of unity. Let $p$ and $q$ be rational primes satisfying $q \equiv 1 \mod p^2$ (resp. modulo $8$ if $p = 2$), and set \[ f(x) = (x^p-\zeta)(x^p-q)(x^p-\zeta q) \cdots (x^p-\zeta^{p-1}q)\,.\] There are infinitely many $c \in k^\times$ such that $\Sha(J) \not\subset p \HH^1(J)$ where $J$ is the Jacobian of the cyclic cover of $\PP^1_k$ defined by $y^p = cf(x)$.
\end{Theorem}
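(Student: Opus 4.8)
The plan is to take as the locally trivial, non-$p$-divisible class $[\ALB^1_X]\in\HH^1(J)$, the image of $\xi$ under $\HH^1(J_\phi)\to\HH^1(J)$, and to apply Theorem \ref{ShaDivn}. Since $J$ is a Jacobian it is principally polarized, so a polarization identifies $\Sha^1(\hat J[p])\cong\Sha^1(J[p])$ and $\Sha(\hat J)\cong\Sha(J)$ compatibly with the map of Theorem \ref{ShaDivn} and with maximal divisible subgroups; moreover $J_\phi=\ker\phi\subseteq J[p]$ because $p=u\phi^{p-1}$ in $\Z[\zeta]\subseteq\End(J)$ for a cyclotomic unit $u$ (from $p=\prod_{a=1}^{p-1}(1-\zeta^a)$ and $1-\zeta^a=(1-\zeta)(1+\zeta+\dots+\zeta^{a-1})$). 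Granting that $\xi\in\Sha^1(J_\phi)$, it then maps to $[\ALB^1_X]\in\Sha(J)$, so by Theorem \ref{ShaDivn} it suffices to show that $[\ALB^1_X]$ is not in the maximal divisible subgroup of $\Sha(J)$; and since $(\Sha(J))_{\mathrm{div}}\subseteq p\Sha(J)=\phi^{p-1}\Sha(J)\subseteq\phi\Sha(J)$, it is enough to prove $[\ALB^1_X]\notin\phi\Sha(J)$.

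\textbf{Step 1: $\xi\in\Sha^1(J_\phi)$ and $X$ is everywhere locally soluble, for every $c$.} By Lemma \ref{xitrivial}, $\xi$ is trivial over $k_v$ as soon as $f$ has a factor of degree prime to $p$ over $k_v$; as $\mu_p\subseteq k_v$, each factor $x^p-a$ of $f$ (with $a$ one of $\zeta,q,\zeta q,\dots,\zeta^{p-1}q$) either is irreducible of degree $p$ or splits completely over $k_v$, so this means at least one of $\zeta,q,\zeta q,\dots,\zeta^{p-1}q$ lies in $(k_v^\times)^p$. I would check this place by place, using the hypothesis $q\equiv1\pmod{p^2}$ (resp. $\pmod 8$ if $p=2$): for $v\mid q$ one has $k_v=\Q_q$ and $p^2\mid q-1$, forcing $\zeta\in(k_v^\times)^p$; for $v\mid p$ the congruence makes $q$ a $p$-th power already in $\Q_p\subseteq k_v$; for $v\nmid pq\infty$ the group $\calO_v^\times/(\calO_v^\times)^p$ is cyclic of order $p$, so if $\zeta\notin(k_v^\times)^p$ the $p$ classes of $q\zeta^i$ $(0\le i\le p-1)$ exhaust it and one is trivial; the archimedean places are harmless ($k_v=\C$ for $p$ odd, and for $p=2$ the prime $q>0$ is a square in $\R$). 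Hence $\xi\in\Sha^1(J_\phi)$ and $[\ALB^1_X]\in\Sha(J)$; and in each case $f$ acquires a $k_v$-rational root $\theta_v$, so $(\theta_v,0)\in X(k_v)$ and $X$ is everywhere locally soluble.

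\textbf{Step 2: reduction to a norm statement.} By Step 1 we may apply Lemma \ref{NormCond}: any $c\in k^\times$ with $c\notin N_{L/k}(L^\times)k^{\times p}$ gives $[\ALB^1_X]\notin\phi\Sha(J)$, hence $\Sha(J)\not\subseteq p\HH^1(J)$ by the first paragraph. Since $k^\times/(k^\times)^p$ is infinite, it remains only to show $N_{L/k}(L^\times)k^{\times p}$ is a \emph{proper} subgroup of $k^\times$ (then infinitely many $c$ lie outside it). Using $\prod_{i=0}^{p-1}(x^p-\zeta^iq)=x^{p^2}-q^p$ one has $L\cong k[x]/(x^p-\zeta)\times\prod_{i=0}^{p-1}k\big((\zeta^iq)^{1/p}\big)$; each factor is a degree-$p$ field (an easy argument via $N_{k/\Q}$ and $\gcd(p,p-1)=1$ shows none of $\zeta,q,\zeta q,\dots$ is a $p$-th power in $k$), and, because all of $\zeta,q,\zeta q,\dots,\zeta^{p-1}q$ are $p$-th powers in $N:=k(\zeta_{p^2},q^{1/p})$ while $[N:k]=p^2$ (the characters $\chi_\zeta,\chi_q$ cutting out $k(\zeta_{p^2})$, $k(q^{1/p})$ are independent, the first being unramified away from $p$ and the second ramified above $q$), these $p+1$ distinct fields are exactly the $p+1$ intermediate fields of the $(\Z/p)^2$-extension $N/k$. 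Thus $N_{L/k}(L^\times)k^{\times p}=\sum_M N_{M/k}(M^\times)k^{\times p}$, the sum inside $k^\times$ over the intermediate fields $M$ of $N/k$, and one must show this is proper.

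\textbf{Step 3: the main obstacle.} Properness here is genuinely global: at every place $v$ of $k$ at least one $M$ splits, so the corresponding local norm group is all of $k_v^\times$ and there is no local obstruction; properness must therefore reflect the failure of a Hasse-type norm principle for the non-cyclic extension $N/k$. Concretely, I would take $c$ to be a prime of $k$ with prescribed Frobenius in $\Gal(N/k)$ (necessarily non-cyclic, i.e. with $N$ not locally generated by a single $M$ at that place) and use Artin reciprocity — the product formula for the degree-$p$ Hilbert symbols $(\,\cdot\,,\zeta)$ and $(\,\cdot\,,q)$ — to show $c$ cannot equal $\prod_M N_{M/k}(\gamma_M)$ modulo $p$-th powers. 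What makes such a $c$ exist is precisely the $(\Z/p)^2$-structure together with the fact that all the characters $\chi_M$ lie in the $2$-dimensional space $\langle\chi_\zeta,\chi_q\rangle$ (equivalently, the linear relation among the $a_j$ coming from $\prod_i(\zeta^iq)$); letting the prime vary then produces infinitely many admissible $c$. Together with Steps 1--2 this proves the theorem, and I expect verifying the reciprocity argument of this step to be where the real work lies.
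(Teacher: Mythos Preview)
Your overall architecture and Steps~1--2 coincide with the paper's proof: the paper also shows that $f$ acquires a linear factor over every completion (by the same pigeonhole and congruence checks), deduces via Lemma~\ref{xitrivial} that $\ALB^1_X$ lies in the image of $\Sha^1(J_\phi)\to\Sha(J)$, and then invokes Lemma~\ref{NormCond} to reduce everything to showing that $c$ can be chosen outside $N_{L/k}(L^\times)k^{\times p}$. (The paper phrases the final deduction via the pairing and Theorem~\ref{ShaDivphi}---finding $T\in\Sha(J)[\phi]$ pairing nontrivially with $\ALB^1_X$---rather than via Theorem~\ref{ShaDivn}, but these are equivalent.)

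The substantive divergence is your Step~3. The paper does \emph{not} argue with Hilbert symbols over $k$; instead it descends to $\Q$ using $\gcd(p,[k:\Q])=1$, decomposes $L$ over $\Q$ as $K_1\times K_2\times K_3^{p-1}$ with $K_1=k(\zeta_{p^2})$, $K_2=k(q^{1/p})$, $K_3=k((\zeta q)^{1/p})$, and runs a valuation argument. One picks a rational prime $r$ whose primes in $k$ are inert in $K_1$ and in $K_2$ (equivalently $r\not\equiv 1\pmod{p^2}$ and $q\notin (k_{\mathfrak r}^\times)^p$); then if $r=N_{L/\Q}(\alpha)z^p$, the $r$-adic valuation forces $\ord_r N_{K_3/\Q}(\alpha_3)\not\equiv 0\pmod p$. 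The key input is that $K_3\otimes\Q_p=\Q_p(\zeta_{p^2})$, whose unit norm group lies in $1+p^2\Z_p$; since $r\notin 1+p^2\Z$, the factorization of $N_{K_3/\Q}(\alpha_3)$ must involve another prime $s\notin 1+p^2\Z$ to odd-mod-$p$ exponent, and one checks this $s$ then violates the valuation constraints coming from $K_1,K_2$. This is elementary and complete; it also shows the quotient $k^\times/N_{L/k}(L^\times)k^{\times p}$ is infinite, not merely nontrivial.

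Your proposed reciprocity approach is plausible in spirit but the sketch is off in one place: a Frobenius in $\Gal(N/k)\cong(\Z/p)^2$ at an unramified prime is always cyclic, so it always lies in some $\Gal(N/M)$ and exactly one intermediate $M$ splits there---you cannot pick a prime where ``$N$ is not locally generated by a single $M$.'' A correct argument along your lines would have to track how a product of norms from \emph{different} $M$'s interacts with the two independent symbols $(\,\cdot\,,\zeta)$ and $(\,\cdot\,,q)$ globally; this is doable but is precisely what the paper's valuation trick circumvents.
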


\begin{proof}
The condition on $p$ and $q$ ensures that the first and second factors of $f(x)$ split into linear factors over $\Q_q$ and $\Q_p$, respectively. For primes of $k$ not lying above $p$ or $q$, Hensel's Lemma shows that the factorization of $f(x)$ can be determined by working over the residue field. By the pigeonhole principle at least one of the reductions of $\zeta, q, \zeta q,\dots, \zeta^{p-1}q$ must be a $p$-th power. Thus we conclude that $f(x)$ has a linear factor over every completion of $k$. By Lemma \ref{xitrivial} this implies that $\ALB^1_X$ lies in the image of $\Sha(J_\phi) \to \Sha(J)$ (for any $c \in k^\times$).

If $c \in k^\times$ is not a norm modulo $p$-th powers from $L = k[x]/f(x)$, then by Lemma \ref{NormCond} the class of $\ALB^1_X$ in $\Sha(J)$ is not divisible by $\phi$. As the induced pairing on $\Sha(J)/\phi\Sha(J)\times \Sha(J)[\phi]$ is nondegnerate there must exist some $T \in \Sha(J)[\phi]$ pairing nontrivially with $\ALB^1_X$. By Theorem \ref{ShaDivphi}, $T$ is not divisible by $\phi$. This implies that $T \notin p \Sha(J)$ since $\phi^{p-1}$ equals $p$ up to a unit in $\End(J)$. Thus the proof will be complete if we can show that $k^\times/N_{L/k}(L^\times)k^{\times p}$ is infinite.

We claim that if $\frak{r}$ is a prime of $k$ lying above a rational prime $r$ distinct from $p$ and $q$ such that $\frak{r}$ splits in the extension defined by $x^p - \zeta^iq$ for some $i \in \{1,\dots,p-1\}$, and is inert in the extension defined by $x^p-\zeta$, then $r \not\in N_{L/k}(L^\times)k^{\times p}$. Clearly the set of such primes has positive density. Since $[k:\Q] = p-1$ it will suffice to show that for any such $r$ we have $r \notin N_{L/\Q}(L^\times)\Q^{\times p}$. By way of contradiction, let us suppose $r$ is such a prime and that we can find $\alpha \in L^\times$ and $z \in \Q^\times$ such that $N_{L/\Q}(\alpha) = rz^p$.

Set $K_1 = k(\sqrt[p]{\zeta})$ and $K_2 = k(\sqrt[p]{q})$. For $i = 1,\dots,p-1$ the extensions of $k$ defined by $x^p - \zeta^iq$  all define the same extension of $\Q$, which we will denote by $K_3$. For a rational prime $s$ distinct from $p$ and $q$ let $\tau_i(s)$ be the greatest common divisor of the inertia degrees of the primes in $K_i$ above $s$. Then $\tau_1(s)$ is prime to $p$ if and only if $\zeta \in k_{\frak{s}}^{\times p}$ for every completion of $k$ at a prime $\frak{s}$ above $s$, and this occurs if and only if $s$ is a $p$-th power modulo $p^2$. Similarly $\tau_2(s)$ is prime to $p$ if and only if $q \in k_{\frak{s}}^{\times p}$ for every $\frak{s}$ above $s$. Finally $\tau_3(s)$ is prime to $p$ if and only if $\zeta q  \in k_{\frak{s}}^{\times p}$ for some $\frak{s}$ above $s$. Thus, when $s$ is not a square modulo $p^2$, at most one of $\tau_1(s)$, $\tau_2(s)$ and $\tau_3(s)$ can be prime to $p$. 

As a $\Q$-algebra, $L$ splits as $K_1\times K_2\times K_3 \times \dots \times K_3$, and the norm $N_{L/\Q}$ is simply the product of the norms of the individual factors. So without loss of generality we can assume the image of $\alpha$ under the splitting is given by $\alpha = (\alpha_1,\alpha_2,\alpha_3,1,\dots,1)$ with $\alpha_i \in K_i^{\times}$. The assumption on $r$ implies that $\tau_1(r)$ and $\tau_2(r)$ are divisible by $p$. So we see that the $r$-adic valuations of $N_{K_1/\Q}(\alpha_1)$ and $N_{K_2/\Q}(\alpha_2)$ are divisible by $p$. Thus the $r$-adic valuation of $N_{K_3/\Q}(\alpha_3)$ must be congruent to $1$ modulo $p$.

The completion of $K_3$ at the prime above $p$ is the $p^2$-cyclotomic extension of $\Q_p$ (since $q \in \Q_p^{\times p}$). Using higher ramification theory one gets that $N_{K_3/\Q}(\calO_{K_3}) \subset 1 + p^2\Z$ (or see \cite[Satz V.1.8]{Neukirch} for a direct proof). As $r$ is not a $p$-th power modulo $p^2$ (and in particular not in $1 + p^2\Z$), there must exist some rational prime $s$ distinct from $r$ and not in $1 + p^2\Z$ such that the $s$-adic valuation of $N_{K_3/\Q}(\alpha_3)$ is prime to $p$. This means that $\tau_3(s)$ is prime to $s$. Thus $\tau_1(s)$ and $\tau_2(s)$ are divisible by $p$. But then the $s$-adic valuation of $N_{L/\Q}(\alpha)$ is prime to $p$ which is a contradiction. This completes the proof
\end{proof}

To complete the proof of Theorem \ref{MainTheorem1} it suffices to note that for an extension $K/k$ and an abelian variety $A$ over $K$, restriction of scalars gives an isomorphism $\HH^1(K,A) \cong \HH^1(k,R_{K/k}(A))$. Thus the examples above also give rise to examples over $\Q$.

\end{document}